\definecolor{refkey}{gray}{0.75}
\definecolor{labelkey}{gray}{0.75}
\newtheorem{theorem}{Theorem}[section]
\newtheorem{proposition}[theorem]{Proposition}
\theoremstyle{definition}
\newtheorem{example}[theorem]{Example}
\newtheorem{remark}[theorem]{Remark}
\numberwithin{equation}{section}
\newcommand{\nC}{\mathbb C}
\newcommand{\nN}{\mathbb N}
\newcommand{\nR}{\mathbb R}
\newcommand{\nZ}{\mathbb Z}
\newcommand{\nJ}{\mathbb J}
\newcommand{\mc}[1]{{\mathcal #1}}
\newcommand{\co}[1]{\overline{#1}}
\DeclareMathOperator{\vsum}{sum} %
\DeclareMathOperator{\var}{var} %
\begin{document}

\title%
{Somewhat Stochastic Matrices}

\author{Branko \'{C}urgus and Robert I. Jewett}
\date{}


\maketitle


\section{Introduction.}

The notion of a Markov chain is ubiquitous in linear algebra and
probability books. For example see \cite[Theorem~5.25]{FIS} and
\cite[p.~173]{Do}. Also, see \cite[p.~131]{S} for the history of the
subject. A {\em Markov} (or {\em stochastic}) {\em matrix} is a
square matrix whose entries are non-negative and whose column sums
are equal to $1$. The term stochastic matrix seems to prevail in
current literature and therefore we use it in the title. But, since
a Markov matrix is a transition matrix of a Markov chain, we prefer
the term Markov matrix and we use it from now on. The theorem below
gives some of the standard results for such matrices.

\begin{theorem} \label{tmti}
Let $M$ be an $n\!\times\!n$ Markov matrix. Suppose that there exists
$p \in \nN$ such that all the entries of $M^p$ are positive. Then the
following statements are true.
\begin{enumerate}[{\rm (a)}]
\item \label{itmia}
There exists a unique $E \in \nR^n$ such that
\begin{equation*}
ME = E \ \ \ \ \text{and}  \ \ \  \ \vsum E = 1.
\end{equation*}
\item \label{itmib}
Let $P$ be the square matrix each of whose columns is equal to $E$.
Then $P$ is a projection and $PX = (\vsum X) E$ for each $X \in
\nR^n$.
\item \label{itmic}
The powers $M^k$ tend to $P$ as $k$ tends to $+\infty$.
\end{enumerate}
\end{theorem}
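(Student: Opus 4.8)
The plan is to rest everything on the single identity $\vsum(MX)=\vsum X$, which holds for every $X\in\nR^n$ because the columns of $M$ sum to $1$. Equivalently, $M$ maps the hyperplane $V=\{X\in\nR^n:\vsum X=0\}$ into itself, and (using nonnegativity of the entries) $\|MZ\|_1\le\|Z\|_1$ for all $Z$, where $\|Z\|_1=\sum_i|Z_i|$. On $V$ I intend to show that the high power $A:=M^p$ is a strict contraction; the three statements then follow by routine linear algebra.

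The heart of the matter is the estimate $\|AX\|_1\le(1-n\delta)\|X\|_1$ for $X\in V$, where $\delta>0$ is the smallest entry of $A$. To prove it I would split $X=X^+-X^-$ into positive and negative parts; since $\vsum X=0$, both $\vsum X^+$ and $\vsum X^-$ equal $t:=\tfrac12\|X\|_1$. Because $A$ is column-stochastic, $AX^+$ and $AX^-$ are nonnegative vectors of sum $t$, and positivity of $A$ forces each of their entries to be at least $\delta t$. Applying $|a-b|=a+b-2\min(a,b)$ coordinatewise gives $\|AX\|_1=\|X\|_1-2\sum_i\min\bigl((AX^+)_i,(AX^-)_i\bigr)\le(1-n\delta)\|X\|_1$, and $0\le 1-n\delta<1$ because a column of $A$ has $n$ entries, each at least $\delta$, summing to $1$. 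This is the only place the hypothesis on $M^p$ enters, and it is the one genuine obstacle; the rest is bookkeeping.

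Granting the estimate, note first that the differences $M^{pm}e_1-M^{p(m+1)}e_1=A^m(e_1-Ae_1)$ lie in $V$ and satisfy $\|A^m(e_1-Ae_1)\|_1\le(1-n\delta)^m\|e_1-Ae_1\|_1$, so $M^{pm}e_1$ is Cauchy and converges to some $E$ with $\vsum E=1$ and $AE=E$. The estimate also shows that $0$ is the only vector of $V$ fixed by $A$, so, since any $A$-fixed $Z$ satisfies $Z-(\vsum Z)E\in V$ and is itself $A$-fixed, the fixed space of $A$ is exactly $\nR E$; as $ME$ lies in this space and has sum $1$, necessarily $ME=E$, and the same one-dimensionality yields the uniqueness in (a). For (b), the definition of $P$ gives $PX=(\vsum X)E$ immediately, whence $P^2X=(\vsum X)(\vsum E)E=PX$, so $P$ is a projection. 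For (c) I would write an arbitrary $X$ as $X=(\vsum X)E+Y$ with $Y\in V$, so that $M^kX=(\vsum X)E+M^kY$; writing $k=pm+r$ with $0\le r<p$ and using $\|M^kY\|_1\le\|A^mY\|_1\le(1-n\delta)^m\|Y\|_1\to0$ shows $M^kX\to(\vsum X)E=PX$. Taking $X=e_j$ for each $j$ gives $M^k\to P$ entrywise.
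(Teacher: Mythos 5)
Your proof is correct, but it takes a genuinely different route from the paper's. The paper deduces Theorem~\ref{tmti} in two lines from its main result, Theorem~\ref{tmt1}, which allows negative entries: positivity of $M^p$ gives $\var(M^p)<1$ by Proposition~\ref{pmm1}, and Theorem~\ref{tmt1} does the rest. That theorem's proof rests on the variation inequality $\|AX\|\leq(\var A)\|X\|$ for $\vsum X=0$ (Theorem~\ref{ileq}), on submultiplicativity of the variation (Proposition~\ref{ivarAB}), and on a purely algebraic existence argument for the fixed vector: the rows of $M-I$ sum to zero, so $M-I$ is singular (Proposition~\ref{ievc}). You instead work directly with the Markov structure. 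Your contraction estimate $\|AX\|\leq(1-n\delta)\|X\|$ on the hyperplane $\vsum X=0$, with $\delta$ the least entry of $A=M^p$, is proved by a Doeblin-type minimum-of-entries computation and plays the role of the paper's $\var(M^p)<1$; in fact $\var A\leq 1-n\delta$ (same computation applied to pairs of columns), so your constant is weaker but entirely sufficient. More substantially, you obtain $E$ analytically, as the limit of the Cauchy sequence $M^{pm}e_1$, where the paper gets its eigenvector algebraically and then only has to normalize and prove uniqueness. What each approach buys: the paper's machinery proves the stronger Theorem~\ref{tmt1}, where entries may be negative---there both of your standing tools fail, since $\|MZ\|\leq\|Z\|$ and the positive/negative-part bookkeeping genuinely use nonnegativity---while your argument is self-contained for the Markov case, never needs the notion of variation, and yields an explicit geometric rate $(1-n\delta)^{\lfloor k/p\rfloor}$. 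One small elision you should repair: the claim that $ME$ lies in the fixed space of $A$ needs its one line, namely $A(ME)=M(AE)=ME$, which holds because $M$ commutes with $A=M^p$.
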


The statement that all the entries of some power of $M$ are positive
is usually abbreviated by saying that $M$ is {\em regular}. The fact
that all the entries of $E$ are positive is easily shown, since $M^p
E = E, \vsum E = 1$, and $M^p$ is positive.

Theorem~\ref{tmti} follows readily from Theorem~\ref{tmt1}, the main
result in this article. In Theorem~\ref{tmt1} the requirement that
the entries of $M$ be non-negative is dropped, the requirement that
the column sums be equal to $1$ is retained, and the condition on
$M^p$ is replaced by something completely different. However, the
conclusions (\ref{itmia}), (\ref{itmib}), (\ref{itmic}) hold true.
Our proof is significantly different from all proofs of
Theorem~\ref{tmti} that we are aware of.

Here is an example:
\begin{equation*}
M = \frac{1}{5} \begin{bmatrix} \phantom{-}0 & \phantom{-}2 & -4 \\
               -1 & -1 & \phantom{-}0 \\
                \phantom{-}6 & \phantom{-}4 & \phantom{-}9
                \end{bmatrix}.
\end{equation*}
Examining the first ten powers of $M$ with a computer  strongly
suggests that the powers of $M$ converge. Indeed, Theorem~\ref{tmt1}
applies here. For this, one must examine $M^2$; see
Example~\ref{exeI}.  The limit is found, as in the case of a Markov
matrix, by determining an eigenvector of $M$. It turns out that $ E =
\frac{1}{3} \begin{bmatrix} -6 & 1 & 8
                \end{bmatrix}^{{\text{\tiny ${\mathsf T}$}}},
$
and
\begin{equation*}
\lim_{k\to +\infty}  M^k  = \frac{1}{3} \begin{bmatrix} -6 & -6 & -6 \\
               \phantom{-}1 & \phantom{-}1 & \phantom{-}1 \\
                \phantom{-}8 & \phantom{-}8 & \phantom{-}8
                \end{bmatrix}.
\end{equation*}
Since $M$ is only a $3\!\times\!3$ matrix, we could show convergence
by looking at the eigenvalues, which are $1, 2/5, 1/5$.

Theorem~\ref{tmti} is often presented as an application of the
Perron-Frobenius Theorem. In \cite{TRH}, the authors give a version
of the Perron-Frobenius Theorem for matrices with some negative
entries, but their results do not seem to be related to ours.

\section{Definitions.}

All numbers in this article are real, except in Example~\ref{execm}.
We study $m\!\times\!n$ matrices with real entries.  The elements of
$\nR^n$ will be identified with column matrices, that is, with
$n\!\times\!1$ matrices. By $J$ we denote any row matrix with all
entries equal to $1$.

Let $X \in \nR^n$ with entries $x_1, \ldots, x_n$.  Set
\begin{equation*}
\vsum X  := \sum_{j=1}^n x_j,  \ \ \ \ \ \|X\|  := \sum_{j=1}^n
|x_j|.
\end{equation*}
Notice that $\vsum X = J X$ and that $\|\cdot\|$ is the
$\ell_1$-norm.

For an $m\!\times\!n$ matrix $A$ with columns $A_1, \ldots, A_n$ the
{\em variation} (or {\em column variation}) of $A$ is defined by:
\begin{equation*}
\var A  := \frac{1}{2} \ \max_{\substack{1 \leq j, \, k \leq n}}
\bigl\| A_j - A_k\bigr\|.
\end{equation*}

If the column sums of a matrix $A$ are all equal to $a$, that is if
$J A = a J$, we say that $A$ is of {\em type}\, $a$.

\section{Column variation and matrix type.}

In this section we establish the properties of the column variation
and the matrix type that are needed for the proof of our main
result. We point our that the restriction to real numbers in the
theorem below is essential, as Example~\ref{execm} shows.

\begin{theorem} \label{ileq}
Let $A$ be an $m\!\times\!n$ matrix and $X \in \nR^n$. If $\vsum X =
0$, then
\[
\|AX\| \leq (\var A) \|X\|.
\]
\end{theorem}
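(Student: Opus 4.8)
The plan is to use the constraint $\vsum X = 0$ to rewrite $AX = \sum_{j=1}^n x_j A_j$ as a nonnegative combination of \emph{differences} of columns, and then estimate each difference by $\var A$. Writing $x_j^+ = \max\{x_j,0\}$ and $x_j^- = \max\{-x_j,0\}$, so that $x_j = x_j^+ - x_j^-$ and $|x_j| = x_j^+ + x_j^-$, the hypothesis $\vsum X = 0$ forces the balance $\sum_j x_j^+ = \sum_j x_j^-$; call this common value $s$, so that $\|X\| = 2s$. If $s = 0$ then $X = 0$ and there is nothing to prove, so I assume $s > 0$.

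The central step is to produce coefficients $c_{jk} \geq 0$ with $\sum_k c_{jk} = x_j^+$ for each $j$ and $\sum_j c_{jk} = x_k^-$ for each $k$. This is exactly a transportation problem: one matches the ``positive mass'' distributed as $(x_j^+)$ against the equal ``negative mass'' distributed as $(x_k^-)$. Such coefficients always exist and can be built greedily (repeatedly route $\min\{x_j^+, x_k^-\}$ along some pair with both marginals still positive, then decrement). Granting them, substituting into $AX = \sum_j x_j^+ A_j - \sum_k x_k^- A_k$ gives
\begin{equation*}
AX \;=\; \sum_{j,k} c_{jk}\, A_j \;-\; \sum_{j,k} c_{jk}\, A_k \;=\; \sum_{j,k} c_{jk}\,(A_j - A_k),
\end{equation*}
while $\sum_{j,k} c_{jk} = s$. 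From here the conclusion is immediate: by the triangle inequality for $\|\cdot\|$ and the bound $\|A_j - A_k\| \leq 2\var A$ coming directly from the definition of $\var A$,
\begin{equation*}
\|AX\| \;\leq\; \sum_{j,k} c_{jk}\,\|A_j - A_k\| \;\leq\; 2(\var A)\sum_{j,k} c_{jk} \;=\; 2(\var A)\,s \;=\; (\var A)\,\|X\|.
\end{equation*}

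A more self-contained variant avoids naming the coupling and instead inducts on the number of nonzero entries of $X$. Pick indices with $x_j > 0$ and $x_k < 0$, set $t = \min\{x_j, -x_k\} > 0$, and replace $X$ by the vector $X'$ obtained by moving both $x_j$ and $x_k$ toward $0$ by $t$. Then $\vsum X' = 0$, at least one further entry vanishes, $\|X'\| = \|X\| - 2t$, and $AX = AX' + t(A_j - A_k)$; applying the inductive hypothesis to $X'$ and adding $t\,\|A_j - A_k\| \leq 2t\var A$ closes the argument, with $X=0$ as the trivial base case. In either version the only genuine content is the balance $\sum_j x_j^+ = \sum_j x_j^-$, which is what lets every unit of positive mass be routed to a negative index so that $AX$ becomes a combination of column differences. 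I expect the bookkeeping of this mass-transport step to be the main thing to get right; once the decomposition is in hand, the norm estimate is a one-line consequence of the definition of variation.
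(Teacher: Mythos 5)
Your proof is correct in both variants, and its skeleton is the same as the paper's: split $X$ into positive and negative parts, write $AX$ as a nonnegative combination of column differences $A_j - A_k$ with total coefficient mass $\|X\|/2$, and finish with the triangle inequality and the bound $\|A_j - A_k\| \leq 2\var A$. The one place where you genuinely differ is the step you yourself flag as the main difficulty, namely the existence of the coupling $(c_{jk})$: you obtain it by a greedy transportation argument (or, in your second variant, by an induction that carries out that greedy routing one step at a time). The paper makes this step trivial by writing the coupling down explicitly: after scaling so that $\sum_j x_j^+ = \sum_j x_j^- = 1$ (i.e., $\|X\| = 2$), it takes $c_{jk} = x_j^- x_k^+$, the product coupling, so that
\begin{equation*}
AX \;=\; \sum_{k} x_k^+ A_k \;-\; \sum_{j} x_j^- A_j \;=\; \sum_{j,k} x_j^- x_k^+ \bigl(A_k - A_j\bigr),
\end{equation*}
where the marginal conditions $\sum_j c_{jk} = x_k^+$ and $\sum_k c_{jk} = x_j^-$ hold by inspection. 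In other words, where you solve a transportation problem, the paper simply multiplies each sum by $1 = \sum_j x_j^-$ or $1 = \sum_k x_k^+$. What each approach buys: the product coupling turns the entire proof into a short algebraic identity plus the triangle inequality, with no existence argument and no induction; your version is slightly longer but avoids the double-sum manipulation, and the inductive variant is fully self-contained as written. If you keep your first variant, you should either spell out the greedy construction (it terminates because each routing step zeroes out at least one marginal, so there are at most finitely many steps) or replace it outright with the explicit choice $c_{jk} = x_j^+ x_k^- / s$, which satisfies your two marginal conditions at sight and collapses the ``bookkeeping'' you were worried about to a single line.
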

\begin{proof}
In this proof, for any real number $t$ we put $t^+ := \max\{t,0\}$
and $t^- := \max\{-t,0\}$. Clearly $t^+, t^- \geq 0$ and $t = t^+ -
t^-$.

Let $A_1, \ldots, A_n$ be the columns of $A$. Assume $\vsum X =0$.
The conclusion is obvious if $X = 0$.

Assume that $X \neq 0$. Then, by scaling, we can also assume that
$\|X\| = 2$. Let $x_1, \ldots, x_n$ be the entries of $X$. Then
\begin{equation*}
\sum_{k=1}^n x_k^+ = \sum_{k=1}^n x_k^- = 1.
\end{equation*}
Consequently
\begin{equation*}
 AX   =  \sum_{k=1}^n x_k A_k
  =  \ \sum_{k=1}^n x_k^+ A_k  \ - \ \sum_{j=1}^n x_j^- A_j.
\end{equation*}
Now we notice that $AX$ is the difference of two convex combinations
of the columns of $A$. From this, the inequality in the theorem seems
geometrically obvious. However, we continue with an algebraic
argument:
\begin{align*}
  AX  & =
  \sum_{k=1}^n \sum_{j=1}^n x_j^- x_k^+ A_k
  - \sum_{j=1}^n \sum_{k=1}^n x_k^+ x_j^- A_j   \\
 & =
  \sum_{k=1}^n \sum_{j=1}^n  x_j^- x_k^+ \bigl(A_k -A_j \bigr).
 \end{align*}
Consequently,
\begin{align*}
\| AX \|
 & \leq
  \sum_{k=1}^n \sum_{j=1}^n x_j^- x_k^+ \bigl\| A_k -A_j \bigr\|
    \\
 & \leq 2 (\var A)
  \sum_{k=1}^n x_k^+ \sum_{j=1}^n x_j^-
    \\
  & = (\var A) \,  \| X \|. \qedhere
\end{align*}
\end{proof}

\begin{proposition} \label{ivarAB}
Let $A$ and $B$ be matrices such that $AB$ is defined.  If $B$ is of
type $b$, then
\begin{equation*}
\var(AB) \leq (\var A) (\var B).
\end{equation*}
\end{proposition}
\begin{proof}
Assume that $B$ is of type $b$ and let $B_1, \ldots, B_l$ be the
columns of $B$.  Then $AB_1, \ldots, AB_l$ are the columns of $AB$.
Since $B$ is of type $b$, for all $j, k \in \{1,\ldots,l\}$ we have
$\vsum(B_j-B_k) = 0$. Therefore, by Theorem~\ref{ileq},
\begin{equation*}
\| AB_j - AB_k \| \leq (\var A) \, \|B_j - B_k \|
\end{equation*}
 for all $j, k \in \{1,\ldots,l\}$.  Hence,
\begin{align*}
\var(AB) & = \frac{1}{2} \, \max_{\substack{1 \leq j, \, k \leq l}}
\| AB_j - AB_k
\| \\
 & \leq (\var A) \,\frac{1}{2} \, \max_{\substack{1 \leq j, \, k \leq l}}   \|B_j -
B_k \| \\
& = (\var A) (\var B). \qedhere
\end{align*}
\end{proof}

\begin{proposition}  \label{ityAB}
Let $A$ and $B$ be matrices such that $AB$ is defined.  If $A$ is of
type $a$ and $B$ is of type $b$, then $AB$ is of type $ab$.
\end{proposition}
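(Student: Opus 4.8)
The plan is to unwind the definition of matrix type directly in terms of the identity $JA = aJ$ and then exploit associativity of matrix multiplication. By definition, $A$ being of type $a$ means $JA = aJ$ and $B$ being of type $b$ means $JB = bJ$, where in each equation the two occurrences of $J$ are rows of ones of the appropriate (generally different) lengths. What must be shown is the single identity $J(AB) = (ab)J$, which is precisely the statement that $AB$ is of type $ab$.

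First I would record the hypotheses as $JA = aJ$ and $JB = bJ$, being careful about the sizes: if $A$ is $m\!\times\!n$ and $B$ is $n\!\times\!l$, then the $J$ in $JA = aJ$ is $1\!\times\!m$ on the left and $1\!\times\!n$ on the right, while the $J$ in $JB = bJ$ is $1\!\times\!n$ on the left and $1\!\times\!l$ on the right. Then I would compute directly:
\[
J(AB) = (JA)B = (aJ)B = a(JB) = a(bJ) = (ab)J,
\]
where the first step is associativity, the second and fourth steps apply the hypotheses, and the scalars $a$ and $b$ are pulled through freely since scalar multiplication commutes with matrix multiplication.

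Since this is essentially a one-line computation, there is no substantive obstacle. The only point requiring care is the bookkeeping of the dimensions of the various rows $J$: the equation $JA = aJ$ silently involves two rows of ones of different lengths, and one must check that the $1\!\times\!n$ row produced by $JA$ is exactly the row to which the hypothesis $JB = bJ$ applies. Once the sizes are seen to match, associativity closes the proof.
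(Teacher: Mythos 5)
Your proof is correct and is essentially identical to the paper's: both compute $J(AB) = (JA)B = aJB = abJ$ using associativity and the two hypotheses. Your extra attention to the sizes of the various rows $J$ is sound bookkeeping that the paper handles implicitly via its convention that $J$ denotes a row of ones of whatever length the context requires.
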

\begin{proof}
If $J A = a J$ and $J B = b J$, then $J (AB) = (J A)B = a J B = ab
J$.
\end{proof}

\section{Square matrices.}

In the previous section we considered rectangular matrices. Next we
study square matrices. With one more property of matrix type, we
shall be ready to prove our main result, Theorem~\ref{tmt1}.

\begin{proposition} \label{ievc}
If $M$ is a square matrix of type $c$, then $c$ is an eigenvalue of
$M$.
\end{proposition}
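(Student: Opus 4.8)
The plan is to read the hypothesis directly as an eigenvalue statement for the transpose of $M$. By definition, $M$ being of type $c$ means $JM = cJ$, where $J$ is the row matrix all of whose entries equal $1$. Rewriting this as $J(M - cI) = 0$ shows that the nonzero row vector $J$ lies in the left null space of $M - cI$.

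From here I would conclude that $M - cI$ is singular: a square matrix possessing a nontrivial left null vector cannot be invertible, so $\det(M - cI) = 0$. Equivalently, transposing $JM = cJ$ yields $M^{\mathsf T} J^{\mathsf T} = c\, J^{\mathsf T}$, which exhibits $c$ as an eigenvalue of $M^{\mathsf T}$ with the nonzero eigenvector $J^{\mathsf T}$. Either route produces the same conclusion, namely that $c$ is a root of $\det(M - cI) = 0$.

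To transfer this back to $M$ itself, I would invoke the identity $\det(M - \lambda I) = \det\bigl((M - \lambda I)^{\mathsf T}\bigr) = \det(M^{\mathsf T} - \lambda I)$, valid for every scalar $\lambda$. Thus $M$ and $M^{\mathsf T}$ share the same characteristic polynomial and hence the same eigenvalues. Since $c$ is an eigenvalue of $M^{\mathsf T}$ (equivalently, a root of $\det(M - cI)$), it is an eigenvalue of $M$.

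I do not expect any genuine obstacle here: the entire content is the elementary fact that a matrix and its transpose have the same eigenvalues, so that a left eigenvalue is automatically a (right) eigenvalue. The one point worth stating carefully is the transpose-invariance of the characteristic polynomial, which should be written out explicitly rather than obtained by silently identifying left and right eigenvectors.
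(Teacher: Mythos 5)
Your proof is correct and follows essentially the same route as the paper: both start from $J(M-cI)=0$ and conclude that $M-cI$ is singular, the paper by noting the rows of $M-cI$ sum to zero and are therefore linearly dependent, you by the equivalent observation that a nontrivial left null vector (equivalently, the eigenvalue $c$ of $M^{\mathsf T}$ together with $\det(M-\lambda I)=\det(M^{\mathsf T}-\lambda I)$) forces $\det(M-cI)=0$. The extra detail you give about transpose-invariance of the characteristic polynomial is a fuller justification of the same singularity step, not a different argument.
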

\begin{proof}
Assume that $J M = c J$. Then $J (M-cI) = 0$. That is, the sum of the
rows of $M-cI$ is $0$ and so the rows of $M-cI$ are linearly
dependent. Hence, $M-cI$ is a singular matrix.
\end{proof}

\begin{theorem} \label{tmt1}
Let $M$ be an $n\!\times\!n$ matrix. Suppose that $M$ is of type $1$
and that there exists $p \in \nN$ such that $\var (M^p) < 1$. Then
the following statements are true.
 \begin{enumerate}[{\rm (a)}]
 \item \label{itma}
 There exists a unique $E \in \nR^n$ such that
 \begin{equation*}
 ME = E \ \ \ \ \text{and}  \ \ \  \ \vsum E = 1.
 \end{equation*}
 \item \label{itmb}
 Let $P$ be the square matrix each of whose columns is equal to $E$.
 Then $P$ is a projection and $PX = (\vsum X) E$ for each $X \in
 \nR^n$.
 \item \label{itmc}
 The powers $M^k$ tend to $P$ as $k$ tends to $+\infty$.
 \end{enumerate}
\end{theorem}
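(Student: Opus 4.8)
The plan is to work throughout with the codimension-one subspace $V := \{X \in \nR^n : \vsum X = 0\} = \ker J$, on which Theorem~\ref{ileq} gives genuine contraction. First note that since $M$ is of type $1$ (i.e. $JM = J$), for any $X \in V$ we have $\vsum(MX) = (JM)X = JX = 0$, so $M(V) \subseteq V$, and likewise $M^p(V) \subseteq V$. Write $r := \var(M^p) < 1$. For part~(\ref{itma}), I would first invoke Proposition~\ref{ievc}: as $M$ is of type $1$, the number $1$ is an eigenvalue, so there is a nonzero $E_0$ with $ME_0 = E_0$, hence $M^pE_0 = E_0$. If $E_0$ had $\vsum E_0 = 0$, then $E_0 \in V$ and Theorem~\ref{ileq} would give $\|E_0\| = \|M^pE_0\| \leq r\|E_0\| < \|E_0\|$, a contradiction; therefore $\vsum E_0 \neq 0$ and I normalize to get $E := E_0/\vsum E_0$ with $ME = E$, $\vsum E = 1$. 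Uniqueness follows by the same device: if $E,E'$ both qualify, their difference $D = E-E'$ lies in $V$ and satisfies $M^pD = D$, forcing $D = 0$.

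For part~(\ref{itmb}), the identity $PX = (\vsum X)E$ is immediate from the fact that every column of $P$ equals $E$, since $PX = \sum_j x_j E$. Then $P^2X = P\bigl((\vsum X)E\bigr) = (\vsum X)(\vsum E)E = (\vsum X)E = PX$, using $\vsum E = 1$, so $P$ is a projection. I would also record here the two facts needed for~(\ref{itmc}): $PE = E$, while $PX = 0$ for every $X \in V$; since $E \notin V$ and $\dim V = n-1$, this exhibits $\nR^n = \mathrm{span}\{E\} \oplus V$ and identifies $P$ as the projection onto $\mathrm{span}\{E\}$ along $V$.

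For part~(\ref{itmc}) I would prove convergence columnwise by showing $M^kX \to PX$ for every $X$. Decompose $X = (\vsum X)E + W$ where $W := X - (\vsum X)E$ satisfies $\vsum W = \vsum X - (\vsum X)(\vsum E) = 0$, so $W \in V$. Since $ME = E$, we get $M^kX = (\vsum X)E + M^kW$, and it remains to show $M^kW \to 0$ for $W \in V$. Iterating Theorem~\ref{ileq} with $A = M^p$ (legitimate because $M^p$ preserves $V$) yields $\|(M^p)^mW\| \leq r^m\|W\|$. For a general exponent, write $k = mp + s$ with $0 \leq s < p$; since $M^sW \in V$, we obtain $\|M^kW\| = \|(M^p)^m(M^sW)\| \leq r^m\,\mu\,\|W\|$, where $\mu := \max_{0 \leq s < p}\|M^s|_V\|$ is a finite $\ell_1$-operator-norm constant. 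As $m = \lfloor k/p\rfloor \to \infty$ and $r < 1$, this tends to $0$. Hence $M^kX \to (\vsum X)E = PX$ for all $X$; applying this to the standard basis vectors gives $M^k \to P$ entrywise.

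The main obstacle is conceptual rather than computational: the estimate in Theorem~\ref{ileq} controls only vectors with zero coordinate sum, and smallness of $\var$ by itself never forces a matrix to be small (a matrix with all columns equal has zero variation yet need not vanish). The essential move is therefore to confine the dynamics to $V$, where $M^p$ is an honest $\ell_1$-contraction, and then to bridge from the subsequence of exponents divisible by $p$ to all exponents via the bounded intermediate factors $M^s|_V$. Identifying $P$ with the projection along $V$ is what lets the eigenvector direction $\mathrm{span}\{E\}$ be separated cleanly from the contracting part, and this is the step I expect to require the most care to state precisely.
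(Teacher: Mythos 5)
Your proof is correct. Parts (\ref{itma}) and (\ref{itmb}) coincide with the paper's argument: the eigenvector comes from Proposition~\ref{ievc}, its sum is shown to be nonzero (and uniqueness obtained) by applying Theorem~\ref{ileq} to $M^p$, and the projection identity is the same computation. Part (\ref{itmc}) reaches the same estimate by a somewhat different route. The paper stays at the matrix level: it uses Proposition~\ref{ityAB} to keep track of type, Proposition~\ref{ivarAB} to get the submultiplicative bound \eqref{eqtml0} on $\var(M^k)$, and then applies Theorem~\ref{ileq} once to $X-E$ for each standard basis vector $X$. You instead work on the invariant subspace $V=\ker J$: you decompose $X = (\vsum X)E + W$ with $W \in V$, iterate Theorem~\ref{ileq} on $(M^p)^m$ directly (legitimate, as you note, because type $1$ gives $M(V)\subseteq V$), and absorb the leftover factor $M^s$ into a finite operator-norm constant $\mu$. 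In effect you have inlined Propositions~\ref{ivarAB} and~\ref{ityAB} rather than cited them, and the two bookkeepings are equivalent: by the paper's later proposition containing \eqref{eqvn}, the variation of a matrix is exactly its $\ell_1$ operator norm on vectors of zero sum, so your $\mu$ is bounded by $\max_{0\le s<p}(\var M)^s$, the same constant appearing in \eqref{eqtml0}. What your formulation buys is the explicit spectral picture $\nR^n = \mathrm{span}\{E\}\oplus V$, with $P$ identified as the projection along $V$ and $M^p$ an honest contraction on $V$; what the paper's buys is a quantitative bound on $\var(M^k)$ itself, which fits its broader theme of treating the variation as a pseudo-norm and makes the final step a single application of Theorem~\ref{ileq} rather than an induction.
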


\begin{proof}
Assume that $M$ is of type $1$ and that there exists $p \in \nN$ such
that $\var (M^p) < 1$. By Proposition~\ref{ievc}, there exists a
nonzero $Y \in \nR^n$ such that $MY =Y$.

Clearly $M^p Y=Y$. If $\vsum Y =0$, then, since $Y \neq 0$,
Theorem~\ref{ileq} yields
\begin{equation*} 
\|Y\| = \| M^p Y \| \leq \var(M^p) \|Y\| < \|Y\|,
\end{equation*}
a contradiction. Setting $E = (1/\vsum Y)Y$ provides a vector whose
existence is claimed in (\ref{itmia}). To verify uniqueness, let $F$
be another such vector.  Then $\vsum(E-F) = 0,$ $M^p(E-F)=E-F$, and
\begin{equation*} 
\|E-F\| = \| M^p (E-F) \| \leq \var(M^p) \|E-F\|.
\end{equation*}
Consequently,  $E-F =0$, since $\var (M^p) < 1$.

By the definition of $P$ in (\ref{itmib}), $P = EJ$. Therefore, $P^2
= ( EJ)( EJ) =  E (J E) J = E [1] J = E J = P$.  To complete the
proof of (\ref{itmib}), we calculate: $PX = E (J X) = (\vsum X) E$.

Let $k \in \nN$. Proposition~\ref{ityAB} implies that $M^k$ is of
type $1$. By the division algorithm there exist unique $q, r \in \nZ$
such that $k = p q + r$ and $0 \leq r < p$. Here $q = \lfloor k/p
\rfloor$ is the floor of $k/p$. By Proposition~\ref{ivarAB},
\begin{equation} \label{eqtml0}
\var( M^k) \leq (\var M)^r \, \bigl( \var(M^p)\bigr)^q \leq \Bigl(
\max_{\substack{0 \leq r < p}} \ (\var M)^r \!\Bigr) \bigl(
\var(M^p)\bigr)^{\lfloor k/p \rfloor}.
\end{equation}
Let $X \in \nR^n$ be such that $\vsum X = 1$. Then $\vsum(X-E) = 0$
and Theorem~\ref{ileq} implies that
\begin{equation} \label{eqtmni}
\bigl\| M^k X - E \bigr\| = \bigl\| M^k (X - E) \bigr\| \leq
\var(M^k) \, \bigl\| X - E  \bigr\|.
\end{equation}

Now, since $\var (M^p) < 1$, \eqref{eqtml0} implies that
\begin{equation*} 
\displaystyle \lim_{k\to+\infty} \var\bigl(M^k\bigr) = 0,
\end{equation*}
and letting $X$ in \eqref{eqtmni} run through the vectors in the
standard basis of $\nR^n$ proves (\ref{itmic}).
\end{proof}

\begin{remark}
If $M$ is an $n\!\times\!n$ matrix of type $1$, and the statements
(\ref{itmia}), (\ref{itmib}), and (\ref{itmic}) are true, then there
exists $p\in \nN$ such that $\var(M^p) < 1$. This follows from the
fact that the variation function is continuous on the space of all
$n\!\times\!n$ matrices.
\end{remark}

\section{Non-negative matrices.}

The propositions in this section are useful for showing that the
variation of a Markov matrix is less than $1$. They are used to
deduce Theorem~\ref{tmti} from Theorem~\ref{tmt1} and also,
repeatedly, in Example~\ref{exmm}.

\begin{proposition} \label{pmm}
Let $A$ be an $m\!\times\!n$  matrix of type $a$ with non-negative
entries. Then $\var A \leq a$.
\end{proposition}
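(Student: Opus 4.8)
The plan is to show directly from the definitions that $\var A \leq a$.The plan is to bound $\|A_j - A_k\|$ for an arbitrary pair of columns $A_j, A_k$ of $A$ and then take the maximum, reading off $\var A$ from its definition. The key observation is that for a column vector with non-negative entries the $\ell_1$-norm coincides with the column sum. Indeed, if $A_j$ has entries $a_{1j}, \ldots, a_{mj} \geq 0$, then $\|A_j\| = \sum_{i=1}^m |a_{ij}| = \sum_{i=1}^m a_{ij} = \vsum A_j$. Since $A$ is of type $a$, we have $\vsum A_j = a$ for every $j$, so $\|A_j\| = a$ for each column.

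With this in hand I would invoke the triangle inequality for the $\ell_1$-norm: for all $j, k$,
\begin{equation*}
\|A_j - A_k\| \leq \|A_j\| + \|A_k\| = a + a = 2a.
\end{equation*}
Taking the maximum over all pairs $j, k$ and multiplying by $\tfrac{1}{2}$ yields $\var A \leq a$ directly from the definition $\var A = \tfrac{1}{2}\max_{j,k}\|A_j - A_k\|$.

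The argument is entirely elementary and I anticipate no substantive obstacle. The one point worth flagging is the role of non-negativity: it is precisely the hypothesis that the entries are non-negative that lets me replace $\|A_j\|$ by $\vsum A_j = a$. Without it the triangle inequality would still give $\|A_j - A_k\| \leq \|A_j\| + \|A_k\|$, but the right-hand side could exceed $2a$, and the clean bound $\var A \leq a$ in terms of the type alone would no longer follow.
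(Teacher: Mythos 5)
Your proof is correct and follows essentially the same route as the paper's: both use the observation that non-negativity makes each column's $\ell_1$-norm equal its sum $a$, then apply the triangle inequality to get $\|A_j - A_k\| \leq 2a$ and take the maximum. Your explicit remark on where non-negativity is used is a nice touch, but the argument itself is identical.
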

\begin{proof}
Let  $A_1, \ldots, A_n$ be the columns of $A$.  Since the entries of
$A$ are non-negative, $\|A_j\| = a$ for all $j \in \{1,\ldots,n\}$.
Therefore,
\[
\|A_j - A_k \| \leq \|A_j\| + \|A_k\| = 2a
\]
for all $j,k \in \{1,\ldots,n\}$, and the proposition follows.
\end{proof}

\begin{proposition} \label{pmm1}
Let $a > 0$. Let $A$ be an $m\!\times\!n$  matrix of type $a$ with
non-negative entries. Then the following two statements are
equivalent.
\begin{enumerate}[{\rm (i)}]
\item \label{ipmmb}
The strict inequality $\var A < a$ holds.
\item \label{ipmmc}
For each $k, l \in \{1,\ldots,n\}$ there exists $j \in
\{1,\ldots,m\}$ such that the $j$-th entries of the $k$-th and $l$-th
columns of $A$ are both positive.
\end{enumerate}
\end{proposition}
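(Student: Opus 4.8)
The plan is to reduce the proposition to a pairwise statement about pairs of columns and then to settle that statement with a single elementary identity relating the $\ell_1$-distance between two columns to their pointwise overlap. First I would record that, because $A$ has non-negative entries and column sums equal to $a$, every column $A_j$ satisfies $\|A_j\| = a$. Writing $u$ and $v$ for two columns of $A$, with non-negative entries $u_1,\ldots,u_m$ and $v_1,\ldots,v_m$ each summing to $a$, the quantity controlling the variation is $\tfrac12\|u-v\|$, and by definition $\var A$ is the maximum of these quantities over all pairs of columns.

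The key step is the elementary identity
\[
|u_j - v_j| = u_j + v_j - 2\min\{u_j,v_j\}, \qquad j = 1,\ldots,m,
\]
valid for non-negative reals. Summing over $j$ and using $\sum_j u_j = \sum_j v_j = a$ gives
\[
\|u - v\| = 2a - 2\sum_{j=1}^m \min\{u_j,v_j\},
\]
hence
\[
\tfrac12\|u - v\| = a - \sum_{j=1}^m \min\{u_j,v_j\}.
\]
Since each term $\min\{u_j,v_j\}$ is non-negative, the sum is $\geq 0$, which recovers $\tfrac12\|u-v\| \le a$ (and so re-proves Proposition~\ref{pmm}); more to the point, $\tfrac12\|u-v\| < a$ holds if and only if $\sum_j \min\{u_j,v_j\} > 0$, that is, if and only if there is some index $j$ with $\min\{u_j,v_j\} > 0$, i.e.\ with both $u_j$ and $v_j$ positive.

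Applying this to the $k$-th and $l$-th columns of $A$, the strict inequality $\tfrac12\|A_k - A_l\| < a$ is equivalent to the existence of a row index $j$ at which both columns are positive, which is exactly the condition in (\ref{ipmmc}) for the pair $(k,l)$. To conclude, I would observe that $\var A < a$ means $\tfrac12\|A_k-A_l\| < a$ for every pair $(k,l)$, since a maximum over finitely many terms is less than $a$ precisely when each term is. Combining the two observations yields the equivalence of (\ref{ipmmb}) and (\ref{ipmmc}). I do not expect a genuine obstacle here: the only points needing care are the passage from the maximum to the pairwise condition and checking that the diagonal case $k=l$ is harmless, which it is, since each column, having sum $a>0$ and non-negative entries, automatically contains a positive entry.
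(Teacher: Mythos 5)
Your proof is correct and follows essentially the same route as the paper: both reduce $\var A < a$ to a condition on each pair of columns and then observe that, for non-negative columns summing to $a$, the $\ell_1$-distance reaches $2a$ exactly when the columns have no common positive entry. The only presentational difference is that you phrase this via the exact identity $\tfrac12\|A_k-A_l\| = a - \sum_j \min\{a_{jk},a_{jl}\}$, whereas the paper argues through the negations (characterizing when $\var A = a$); the content is the same.
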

\begin{proof}
Let $A = \bigl[ a_{jk} \bigr]$ and let $A_1, \ldots, A_n$ be the
columns of $A$. To prove that (\ref{ipmmb}) and (\ref{ipmmc}) are
equivalent we consider their negations.  By Proposition~\ref{pmm} and
the definition, $\var A = a$ if and only if there exist $k_0, l_0 \in
\{1,\ldots,n\}$ such that
\[
\bigl\|A_{k_0} - A_{l_0} \bigr\| = 2a.
\]
This is equivalent to
\begin{equation} \label{eqpa1}
\sum_{j=1}^m \bigl| a_{jk_0} - a_{jl_0} \bigr| = \sum_{j=1}^m \bigl(
a_{jk_0} + a_{jl_0} \bigr).
\end{equation}
Since all the terms in the last equality are non-negative,
\eqref{eqpa1} is equivalent to $a_{jk_0}$ or $a_{jl_0}$ being $0$ for
all $j \in \{1,\ldots,m\}.$

 Hence, $\var A = a$ if and only if there
exist $k_0, l_0 \in \{1,\ldots,n\}$ such that for all $j \in
\{1,\ldots,m\}$ we have $a_{jk_0} \, a_{jl_0} = 0$. This proves that
(\ref{ipmmb}) and (\ref{ipmmc}) are equivalent.
\end{proof}

Now we can give a short proof of Theorem~\ref{tmti}.

\begin{proof}
Let $M$ be a regular Markov matrix and assume that $M^p$ is
positive.  By Proposition~\ref{pmm1}, $\var(M^p) < 1$. Therefore
Theorem~\ref{tmt1} applies.
\end{proof}

\section{Examples.} \label{sexe}

\begin{example} \label{exeI}
In the Introduction we used
\begin{equation*}
M = \frac{1}{5} \begin{bmatrix} \phantom{-}0 & \phantom{-}2 & -4 \\
               -1 & -1 & \phantom{-}0 \\
                \phantom{-}6 & \phantom{-}4 & \phantom{-}9
                \end{bmatrix}
\end{equation*}
as an example of a matrix for which the powers converge. The largest
\mbox{$\ell_1$-distance} between two columns is between the second
and the third, and is equal to $12/5$. Therefore, $\var M = 6/5
> 1$. But
\begin{equation*}
M^2 = \frac{1}{25} \begin{bmatrix} -26 & -18 & -36 \\
               \phantom{-2}1 & \,\,\,-1 & \phantom{-3}4 \\
                \phantom{-}50 & \phantom{-}44 & \phantom{-}57
                \end{bmatrix}
\end{equation*}
and $\var(M^2) = 18/25 < 1$. Hence, Theorem~\ref{tmt1} applies.
\end{example}

\begin{example}
For $2\!\times\!2$ matrices of type $1$ it is possible to give a
complete analysis. Let $a,b \in \nR$ and set
\[
M = \left[ \begin{array}{cc} 1-a & b
\\ a & 1-b \end{array}\right] \ \ \ \text{and} \ \ \ c=a+b.
\]
Then $\var M = |1-c|$. We distinguish the following three cases:
\begin{enumerate}[(i)]
\item
$c\neq 0$. The eigenvalues of $M$ are $1$ and $1-c$, and the
corresponding eigenvectors are $\begin{bmatrix} b \\ a
\end{bmatrix}$ and $\begin{bmatrix} 1 \\ -1
\end{bmatrix}$. If $0 < c < 2$, then $\var M < 1$. Consequently, $E =
\dfrac{1}{c} \begin{bmatrix} b \\ a
\end{bmatrix}$ and $M^k$ converges. Otherwise, $M^k$ diverges.

\item
$c=0, \, a \neq 0$. \, In this case, $\var M = 1$ and $1$ is an
eigenvalue of multiplicity $2$. It can be shown by induction that
\[
M^k = \begin{bmatrix} 1 & 0 \\ 0 & 1
\end{bmatrix} + k\, a  \begin{bmatrix} -1 & -1 \\ 1 & 1
\end{bmatrix}.
\]
So $M^k$ diverges.

\item
$c = a = b = 0$. \, So, $M = I$.
\end{enumerate}

Thus, for a $2\!\times\!2$ matrix $M$ of type $1$ which is not the
identity matrix, $M^k$ converges if and only if $\var M < 1$. Regular
$2\!\times\!2$  Markov matrices were studied in \cite{Ro}.

\end{example}

\begin{example} \label{exmm}
Consider the following three kinds of Markov matrices:
\begin{equation*}
K = \begin{bmatrix} 1 & + & + \\ 0 & + & + \\
0 & 0 & + \end{bmatrix}, \ \ \ L = \begin{bmatrix} + & + & 0 \\ + & 0
& + \\ 0 & + & + \end{bmatrix}, \ \ \ M = \begin{bmatrix}
 0 & + & 0 \\
 0 & 0 & 1 \\
 1 & + & 0
  \end{bmatrix}.
\end{equation*}
Here we use $+$ for positive numbers. All claims below about the
variation rely on Propositions~\ref{pmm} and~\ref{pmm1}.

The matrix $K$ is not regular, but $\var K < 1$. Also, $E =
\begin{bmatrix} 1 & 0 & 0 \end{bmatrix}^{{\text{\tiny ${\mathsf T}$}}}$.

The matrix $L$ is not positive, but $\var L < 1$. Also,
Theorem~\ref{tmti} applies since $L^2$ is positive.

The first five powers of $M$ are:
\begin{equation*}
\begin{bmatrix}
 0 & + & 0 \\
 0 & 0 & 1 \\
 1 & + & 0
  \end{bmatrix}, \ \
\begin{bmatrix}
 0 & 0 & + \\
 1 & + & 0 \\
 0 & + & +
   \end{bmatrix},  \ \
\begin{bmatrix}
+ & + & 0 \\
 0 & + & + \\
 + & + & +
   \end{bmatrix}, \ \
\begin{bmatrix}
0 & + & + \\
 + & + & + \\
 + & + & +
\end{bmatrix}, \ \
\begin{bmatrix}
+ &  + & + \\
+ & + & + \\
+ & + & +
\end{bmatrix}.
\end{equation*}
The variation of the first two matrices is $1$, while $\var(M^3) <
1$. The first positive power of $M$ is $M^5$.

In fact, the following general statement holds. For a $3\!\times\!3$
Markov matrix $M$, the sequence $M^k,k\in\nN$, converges to a
projection of rank $1$ if and only if $\var(M^3) < 1$. This was
verified by examining all possible cases; see \cite{CJ}.
\end{example}

\begin{example} \label{execm}
In this example we consider matrices with complex entries. Let
$\alpha = (-1+ i \sqrt{3})/2$. Then $1, \alpha$, and $\co{\alpha}$
are the cube roots of unity. Notice that $1+ \alpha + \co{\alpha} =
0, \ \alpha^2 = \co{\alpha}$, and $\co{\alpha}^2 = \alpha$.

The examples below were suggested by the following orthogonal basis
for the complex inner product space $\nC^3$:
\[
U = \begin{bmatrix} 1 \\
 1 \\ 1 \end{bmatrix}, \ \ \ \ V = \begin{bmatrix} 1 \\
 \alpha \\ \co{\alpha} \end{bmatrix}, \ \ \ \
 W = \begin{bmatrix} 1 \\ \co{\alpha}  \\
 \alpha \end{bmatrix}.
\]

We first give an example which shows that the conclusion of
Theorem~\ref{ileq} does not hold for matrices with complex entries.
Set $ A=
\begin{bmatrix} 1 & \co{\alpha} & \alpha \end{bmatrix}$. Then $AV
=[3], \  \vsum V = 0, \ \var A = {\sqrt{3}}/{2} < 1$, and
\[
 \|AV\| > (\var A) \|V\|.
\]

Next we give an example showing that the restriction to real numbers
cannot be dropped in Theorem~\ref{tmt1}. Consider the matrices
\begin{equation*}
 P = \frac{1}{3} \begin{bmatrix}
     1 & 1 & 1 \\ 1 & 1 & 1 \\1 & 1 & 1
            \end{bmatrix}  \ \ \ \text{and} \ \ \
 Q = \frac{1}{3} \begin{bmatrix}
   1  & \co{\alpha} & \alpha \\
   \alpha & 1 &  \co{\alpha}  \\
    \co{\alpha} & \alpha & 1
   \end{bmatrix}.
\end{equation*}
Notice that $P$ is the orthogonal projection onto the span of $U$ and
$Q$ is the orthogonal projection onto the span of $V$. Let $c \in
\nR$ and set
\[
M = P+c\,Q.
\]
Then $PV = 0$ and $QV  = V$. Therefore $MV = c\, V$, showing that $c$
is an eigenvalue of $M$.

The matrix $P$ is of type $1$ with variation $0$, while $Q$ is of
type $0$ with variation $\sqrt{3}/2$. Hence, $M$ is of type $1$ and
\[
\var M = \var(c\,Q) = |c| \sqrt{3}/2.
\]
Therefore, if $1 < c < 2/\sqrt{3}$, then $\var M < 1$, but $M^k$
diverges.
\end{example}

\section{The variation as a norm.}

The first proposition below shows that the variation function is a
pseudo-norm on the space of all $m\!\times\!n$ matrices. The
remaining propositions identify the variation of a matrix as the
norm of a related linear transformation.

\begin{proposition} \label{ppn}
Let $A$  be $m\!\times\!n$ matrix.
\begin{enumerate}[{\rm (a)}]
\item \label{ippna}
If $c \in \nR$, then $\var(cA) = |c| \var A$.
\item \label{ippnb}
All columns of $A$ are identical if and only if $\var A = 0$.
\item\label{ippnc}
If $B$ is another $m\!\times\!n$ matrix, then $\var(A+B) \leq \var A
+ \var B. $
\end{enumerate}
\end{proposition}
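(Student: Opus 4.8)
The plan is to verify the three pseudo-norm axioms directly from the definition $\var A = \tfrac12 \max_{1\le j,k\le n}\|A_j - A_k\|$, reducing each to a corresponding property of the $\ell_1$-norm on $\nR^m$. Throughout, let $A_1,\dots,A_n$ be the columns of $A$, and in part (\ref{ippnc}) let $B_1,\dots,B_n$ be the columns of $B$; observe that the $j$-th column of $cA$ is $cA_j$ and that the $j$-th column of $A+B$ is $A_j+B_j$.

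For (\ref{ippna}) I would use the absolute homogeneity of the $\ell_1$-norm: since $\|cA_j - cA_k\| = |c|\,\|A_j - A_k\|$ for every pair $j,k$, taking the maximum over $j,k$ and multiplying by $\tfrac12$ gives $\var(cA) = |c|\var A$ at once.

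For (\ref{ippnb}) I would use that a norm vanishes only at the zero vector. Then $\var A = 0$ holds precisely when $\max_{j,k}\|A_j - A_k\| = 0$, which forces $\|A_j - A_k\| = 0$, hence $A_j = A_k$, for every pair $j,k$; this is exactly the statement that all columns of $A$ coincide, and the converse implication is immediate.

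The only step requiring care is the triangle inequality (\ref{ippnc}), which I expect to be the main obstacle. I would start from
\[
\var(A+B) = \frac{1}{2}\,\max_{1\le j,k\le n}\bigl\|(A_j - A_k) + (B_j - B_k)\bigr\|,
\]
apply the triangle inequality for the $\ell_1$-norm inside the maximum to bound each summand by $\|A_j - A_k\| + \|B_j - B_k\|$, and then invoke the elementary subadditivity of the maximum, $\max_{j,k}(s_{jk} + t_{jk}) \le \max_{j,k} s_{jk} + \max_{j,k} t_{jk}$. The subtlety lies exactly here: the index pair maximizing $\|A_j - A_k\|$ need not coincide with the pair maximizing $\|B_j - B_k\|$, but subadditivity of the maximum accommodates this, and after splitting the bound and factoring out $\tfrac12$ the right-hand side becomes $\var A + \var B$.
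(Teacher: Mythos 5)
Your proposal is correct and follows essentially the same route as the paper: parts (\ref{ippna}) and (\ref{ippnb}) reduce directly to properties of the $\ell_1$-norm, and part (\ref{ippnc}) rests on the columnwise triangle inequality $\|(A_j+B_j)-(A_k+B_k)\| \leq \|A_j-A_k\| + \|B_j-B_k\|$, exactly as in the paper's proof. Your explicit appeal to subadditivity of the maximum merely spells out a step the paper leaves implicit.
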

\begin{proof}
The proofs of (\ref{ippna}) and (\ref{ippnb}) are straightforward.
To prove (\ref{ippnc}), let $A_1, \ldots,\!A_n$ be the columns of
$A$ and let $B_1, \ldots, B_n$ be the columns of $B$. Then $A_1+B_1,
\ldots, A_n+B_n$ are the columns of $A+B$, and for all $j,k \in
\{1,\ldots,n\}$,
\[
\| (A_j+B_j) - (A_k+B_k) \| \leq \| A_j - A_k \| + \| B_j - B_k \|.
\qedhere
\]
\end{proof}

\begin{proposition}
Let $A$ be an $m\!\times\!n$ matrix with more then one column.  Then
\begin{equation} \label{eqvn}
\var A = \max\Bigl\{ \| A X \| \ : \ X \in \nR^n, \ \| X \| = 1, \
\vsum X =0 \Bigr\}.
\end{equation}
\end{proposition}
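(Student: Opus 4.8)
The plan is to prove \eqref{eqvn} by squeezing the right-hand maximum between $\var A$ and itself: Theorem~\ref{ileq} will immediately give the upper bound, and a single explicitly chosen $X$ will realize the lower bound. Before either estimate, I would check that the maximum is genuinely attained, so that the statement is well-posed. Write $S = \bigl\{ X \in \nR^n : \|X\| = 1, \ \vsum X = 0 \bigr\}$. This set is closed and bounded, hence compact, and it is nonempty precisely because $A$ has more than one column: with $n \geq 2$ the vector $\tfrac12(e_1 - e_2)$ lies in $S$, where $e_1, \ldots, e_n$ is the standard basis of $\nR^n$. Since $X \mapsto \|AX\|$ is continuous, a maximizer over $S$ exists.

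For the inequality $\max\{\ldots\} \leq \var A$, I would simply invoke Theorem~\ref{ileq}. Every $X \in S$ satisfies $\vsum X = 0$, so $\|AX\| \leq (\var A)\|X\| = \var A$; taking the maximum over $X \in S$ yields the bound. For the reverse inequality $\var A \leq \max\{\ldots\}$, I would exhibit one feasible vector at which the value $\var A$ is achieved. Let $A_1, \ldots, A_n$ be the columns of $A$ and choose indices $j_0, k_0$ with $\var A = \tfrac12\,\|A_{j_0} - A_{k_0}\|$. Assuming $\var A > 0$ forces $j_0 \neq k_0$, and then setting $X = \tfrac12(e_{j_0} - e_{k_0})$ gives $\vsum X = 0$ and $\|X\| = 1$, so $X \in S$, while $AX = \tfrac12(A_{j_0} - A_{k_0})$ yields $\|AX\| = \var A$. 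Hence the maximum is at least $\var A$, and combined with the first inequality this proves equality.

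The degenerate case $\var A = 0$ must be disposed of separately, since there the maximizing pair of columns coincides and the construction $X = \tfrac12(e_{j_0} - e_{k_0})$ collapses to $0 \notin S$. But in that case there is nothing to prove: the upper bound already forces $\max\{\ldots\} \leq 0$, while $\|AX\| \geq 0$ for the vector of $S$ produced above gives the reverse, so both sides equal $0$.

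I do not anticipate a real obstacle here; the content of the identity is carried entirely by Theorem~\ref{ileq}, and the matching lower bound is a one-line choice of $X$. The only points demanding care are exactly the two places where the hypotheses enter: the assumption of more than one column, which guarantees $S \neq \emptyset$ (and hence that the maximum is taken over a nonempty compact set), and the separate treatment of $\var A = 0$, where the explicit extremizer degenerates.
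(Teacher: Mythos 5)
Your proof is correct and follows essentially the same route as the paper's: the upper bound from Theorem~\ref{ileq} and the explicit extremizer $X_0 = \tfrac12(e_{j_0}-e_{k_0})$ for a maximizing pair of columns. Your separate treatment of $\var A = 0$ (and the compactness check) is harmless but unnecessary, since with more than one column one may always choose \emph{distinct} indices $j_0 \neq k_0$ attaining $\|A_{j_0}-A_{k_0}\| = 2\var A$, and the same construction then covers every case at once.
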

\begin{proof}
It follows from Theorem~\ref{ileq} that the set on the right-hand
side of \eqref{eqvn} is bounded by $\var A$. To prove that $\var A$
is the maximum, let $A_{j_0}$ and $A_{k_0}$ be columns of $A$ such
that $\bigl\|A_{j_0} - A_{k_0}\bigr\| = 2 \var A$.  Choose $X_0 \in
\nR^n$ such that its $j_0$-th entry is $1/2$, its $k_0$-th entry is
$-1/2$ and all other entries are $0$. Then $\| A X_0 \| = \var A$,
$\| X_0 \| = 1$ and $\vsum(X_0) = 0$.
\end{proof}

\begin{remark}
Let $\mc{V}_n$ denote the set of all vectors $X \in \nR^n$ such that
$\vsum X = 0$. This is a subspace of $\nR^n$. An $m\!\times\!n$
matrix $A$ determines a linear transformation from $\mc{V}_n$ to
$\nR^m$. The previous proposition tells us that the norm of this
transformation is $\var A$.
\end{remark}

\begin{proposition} \label{plvn}
Let $B$ be an $m\!\times\!n$ matrix of type $b$ with more then one
row. Then
\begin{equation} \label{eqvln}
\var B = \max\Bigl\{ \var(Z B) \ : \
 Z^{{\text{\tiny ${\mathsf T}$}}}
 \in \nR^m, \ \var Z = 1 \Bigr\}.
\end{equation}
\end{proposition}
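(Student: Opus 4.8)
The plan is to prove the two inequalities separately, the upper bound being essentially immediate and the attainment of the maximum requiring an explicit choice of $Z$. First I would unwind the notation. Here $Z$ is a $1\!\times\!m$ row matrix, so $ZB$ is a $1\!\times\!n$ row matrix whose columns are the scalars $ZB_1,\dots,ZB_n$, where $B_1,\dots,B_n$ are the columns of $B$; hence
\[
\var(ZB)=\tfrac12\,\max_{1\le j,k\le n}\bigl|Z(B_j-B_k)\bigr|.
\]
Since $B$ is of type $b$, Proposition~\ref{ivarAB} applies with $A$ replaced by $Z$ and gives $\var(ZB)\le(\var Z)(\var B)$. Thus for every admissible $Z$ (those with $\var Z=1$) we get $\var(ZB)\le\var B$, so $\var B$ is an upper bound for the set on the right-hand side of \eqref{eqvln}.

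The core of the argument is to exhibit a single $Z$ with $\var Z=1$ that attains this bound, and the key computation is the following dual description: for any $Y\in\nR^m$ with $\vsum Y=0$,
\[
\max\bigl\{\,|ZY|\;:\;Z^{{\text{\tiny ${\mathsf T}$}}}\in\nR^m,\ \var Z=1\,\bigr\}=\|Y\|.
\]
To see this I would first observe that $ZY$ is unchanged when a constant is added to every entry of $Z$, because $\vsum Y=0$; the same shift leaves $\var Z$ unchanged, so only the spread of the entries of $Z$ matters, and $\var Z=1$ means those entries range over an interval of length $2$. To maximize $ZY=\sum_i z_iy_i$ I would take $Z$ to be the sign pattern of $Y$, setting its $i$-th entry to $1$ where the $i$-th entry of $Y$ is nonnegative and to $-1$ elsewhere; then $ZY=\sum_i|y_i|=\|Y\|$, and (provided $Y\ne 0$, which forces entries of both signs since $\vsum Y=0$) the values $\pm1$ give $\var Z=1$.

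Finally I would assemble the pieces. Choosing columns $B_{j_0},B_{k_0}$ of $B$ with $\|B_{j_0}-B_{k_0}\|=2\var B$ and putting $Y=B_{j_0}-B_{k_0}$ (which satisfies $\vsum Y=0$ by type $b$), the $Z$ constructed above gives $\var(ZB)\ge\tfrac12|ZY|=\tfrac12\|Y\|=\var B$, matching the upper bound; equivalently, one can interchange the two maxima and apply the dual description columnwise. The degenerate case $\var B=0$ is handled separately and trivially, since then all columns of $B$ coincide and every $\var(ZB)$ is $0$. I expect the main obstacle to be the dual computation, specifically justifying the reduction to the sign vector: one must use shift invariance to replace the equality constraint $\var Z=1$ by a box constraint without loss of generality, and must notice that the hypothesis of more than one row is exactly what guarantees a row $Z$ with $\var Z=1$ exists at all.
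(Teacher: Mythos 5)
Your proposal is correct and follows essentially the same route as the paper: the upper bound via Proposition~\ref{ivarAB}, and attainment by taking the sign vector of the difference of two columns realizing $2\var B$, using $\vsum(B_{j_0}-B_{k_0})=0$ to guarantee entries of both signs and hence $\var Z=1$. Your packaging of the key step as a dual identity $\max\{|ZY| : \var Z = 1\} = \|Y\|$ for sum-zero $Y$ is only a cosmetic reformulation of the paper's explicit construction of $Z_0$.
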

\begin{proof}
If $\var B =0$ the statement follows from Proposition~\ref{ivarAB}.
So, assume $\var B > 0$. By Proposition~\ref{ivarAB}, the set on the
right-hand side of \eqref{eqvln} is bounded by $\var B$. Let $B =
\bigl[ b_{jk}\bigr]$ and let $B_{k_0}$ and $B_{l_0}$ be columns of
$B$ such that $\bigl\|B_{k_0} - B_{l_0}\bigr\| = 2 \var B$. Let $Z_0$
be the row with entries defined by:
\begin{equation*}
z_j = \begin{cases} \phantom{-}  1 & \ \ \text{if}
               \ \ \ b_{jk_0} > b_{jl_0}, \\[4pt]
            -1 & \ \ \text{if} \ \ \ b_{jk_0} \leq b_{jl_0}.
\end{cases}
\end{equation*}
Since $\bigl\|B_{k_0} - B_{l_0}\bigr\| > 0$ and $\vsum(B_{k_0}) =
\vsum(B_{l_0}) = 0$, there exist at least one positive and at least
one negative entry in $Z_0$. Therefore, $\var(Z_0) = 1$. By the
definition of $Z_0$ the difference between $k_0$-th and $l_0$-th
entry in $Z_0B$ is $\bigl\|B_{k_0} - B_{l_0}\bigr\| /2 = \var B$.
Notice that if $Z$ is a row matrix, then $\var Z =
\frac{1}{2}\bigl(\max Z - \min Z\bigr)$. Therefore, $\var(Z_0B) \geq
\var B$. This proves \eqref{eqvln}.
\end{proof}

\begin{remark}
Let $\nR_m$ denote the set of all row vectors with $m$ entries.
Denote by $\nJ_m$ the subspace of all scalar multiples of $J$.  An
$m\!\times\!n$ matrix $B$ of type $b$ determines a linear
transformation from the factor space $\nR_m/\nJ_m$ to the factor
space $\nR_n/\nJ_n$ in the following way:
\[
 Z  + \nJ_m \mapsto  ZB  + \nJ_n, \ \ \ Z
\in \nR_m.
\]
It is easy to verify that this is a well-defined linear
transformation.  By Proposition~\ref{plvn}, the norm of this
transformation is exactly $\var B$.
\end{remark}

\noindent\textbf{Branko~\'{C}urgus}

\noindent\textit{Department of Mathematics, Western Washington
University, \\ Bellingham, Washington 98225} \\
{\tt curgus@cc.wwu.edu}

\bigskip

\noindent\textbf{Robert~I.~Jewett}

\noindent\textit{Department of Mathematics, Western Washington
University, \\ Bellingham, Washington 98225}


\begin{thebibliography}{99}

\bibitem{CJ}
B.~\'{C}urgus and R.~I.~Jewett, On the variation of $3\!\times\!3$
stochastic matrices, (August 2007), available at \\
\url{http://myweb.facstaff.wwu.edu/curgus/papers.html}.

\bibitem{Do} 
J.~L.~Doob, {\em Stochastic Processes}, John Wiley \& Sons, 1990;
reprint of the 1953 original.

\bibitem{FIS} 
S.~Friedberg, A.~Insel, L.~Spence, {\em Linear Algebra}, 4th ed.,
Prentice Hall, 2002. 


\bibitem{Ro}
N.~J.~Rose, On regular Markov chains, this {\sc Monthly}, {\bf 92}
(1985), 146.

\bibitem{S} 
E.~Seneta, {\em Non-Negative Matrices and Markov Chains}, Springer,
2006. 


 \bibitem{TRH}
 P.~Tarazaga, M.~Raydan, and A.~Hurman,  Perron-Frobenius theorem for
 matrices with some negative entries.  Linear Algebra Appl. {\bf 328}
 (2001), 57--68.

\end{thebibliography}
\end{document}